\newtheorem{theorem}{Theorem}[section]
\newtheorem{lemma}[theorem]{Lemma}
\theoremstyle{definition}
\newcommand{\R}{\mathbb R}
\newcommand{\F}{\mathbb F}
\newcommand{\proj}{\mathbb{P}}
\author{Frank de Zeeuw}
\title{A short proof of Rudnev's point-plane incidence bound}
\begin{document}
\date{}
\maketitle

\begin{abstract}
In this note we give a shortened proof of a theorem of Rudnev, 
which bounds the number of incidences between points and planes over an arbitrary field.
Rudnev's proof uses a map that goes via the four-dimensional Klein quadric to a three-dimensional space, 
where it applies a bound of Guth and Katz on intersection points of lines.
We describe a simple geometric map that directly sends point-plane incidences to line-line intersections in space, 
allowing us to reprove Rudnev's theorem with fewer technicalities.
\end{abstract}

\section{Introduction}

Let $\F$ be any field.
Given a point set $P\subset \F^3$ and a set $Q$ of planes, 
we define the number of \emph{incidences} between $P$ and $Q$ to be $I(P,Q) = |\{(p,q)\in P\times Q: p\in q\}|$.
Rudnev \cite{R} proved the following incidence bound.

\begin{theorem}\label{thm:rudnev}
Let $P$ be a finite set of points in $\F^3$
and $Q$ a finite set of planes in $\F^3$.
Assume that $|P|\leq |Q|$, 
and if $\F$ has positive characteristic $p$,
assume that $|P|=O(p^2)$.
Suppose that no line contains $k$ points of $P$.
Then
\[ I(P,Q) = O\left(|P|^{1/2}|Q| + k|Q|\right).\]
\end{theorem}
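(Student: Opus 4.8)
The plan is to construct, directly in $\F^3$, two families of lines whose intersections encode the point--plane incidences, and then to invoke the Guth--Katz bound on rich points of line families. After a projective change of coordinates I may assume every plane of $Q$ has the form $z = sx + ty + u$, since the planes through a fixed direction form a lower-dimensional family that I would handle separately. To a point $p = (a,b,c)$ I associate the line
\[ \ell_p = \{(a,\,w,\,bw - c) : w \in \F\}, \]
and to the plane $z = sx+ty+u$ the line
\[ \ell_q = \{(w,\,t,\,-sw - u) : w \in \F\}. \]
A one-line computation shows that $\ell_p$ and $\ell_q$ meet if and only if $sa + tb + u = c$, that is, exactly when $p \in q$; moreover both maps are injective. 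Thus $I(P,Q)$ equals the number of intersecting pairs $(\ell_p,\ell_q)$ with $p\in P$ and $q\in Q$.

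Writing $n_P(x)$ and $n_Q(x)$ for the number of lines $\ell_p$, respectively $\ell_q$, through a point $x \in \F^3$, I then have $I(P,Q) = \sum_x n_P(x)\,n_Q(x)$, the sum running over intersection points. The two hypotheses enter through the geometric meaning of these multiplicities. The points $p\in P$ whose line $\ell_p$ passes through a fixed $x = (x_1,x_2,x_3)$ are precisely those of the form $(x_1,\,b,\,x_2 b - x_3)$, which are collinear in $\F^3$; hence $n_P(x) \le k-1$ by the assumption that no line carries $k$ points of $P$. Dually, the planes $q\in Q$ with $x \in \ell_q$ form a pencil through a common line, so a large value of $n_Q(x)$ forces many planes of $Q$ to share a line --- a configuration I can strip out and control separately.

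To finish I would apply the Guth--Katz estimate: for $N$ lines in $\F^3$ with no more than $\sqrt N$ in a common plane (and $N = O(p^2)$ in characteristic $p$, which is where the hypothesis $|P| = O(p^2)$ is used), the number of points on at least $r$ lines is $O(N^{3/2}/r^2 + N/r)$. I would split the intersection points into the degenerate ones, where $n_P(x)\ge 2$ and $n_Q(x)\ge 2$ --- these lie on lines carrying at least two points of $P$ and at least two planes of $Q$, and using $n_P(x)\le k$ together with a count of planes through such lines their total contribution comes to $O(k|Q|)$ --- and the remaining points. For the latter a dyadic decomposition over the values of $n_Q(x)$, combined with the rich-point bound and the inequality $n_P(x)\le k$, controls the sum; the gain over the trivial estimate $|P||Q|$ comes from the $N^{3/2}/r^2$ factor, and the bookkeeping is arranged to produce the main term $O(|P|^{1/2}|Q|)$.

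The main obstacle is this last step: verifying that the Guth--Katz non-degeneracy hypothesis (few lines per plane) either holds or can be restored by removing configurations whose contribution is absorbed into the $k|Q|$ term, and then carrying out the dyadic summation so that the degenerate part emerges as $O(k|Q|)$ and the generic part as $O(|P|^{1/2}|Q|)$ with no spurious logarithmic factors. Arranging the count so that $|P|^{1/2}|Q|$ appears rather than the weaker $|Q|^{3/2}$ is the delicate point, and is exactly where the assumption $|P|\le|Q|$ must be exploited.
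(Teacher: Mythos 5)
Your map is sound, and it is essentially the paper's: your $\ell_p$ and $\ell_q$ are explicit coordinate versions of the pencils $\varphi(p)$ (lines through $p$ meeting a fixed line $\lambda$) and $\psi(q)$ (lines in $q$ through $q\cap\lambda$) of Section \ref{sec:map}, your check that $\ell_p\cap\ell_q\neq\emptyset$ iff $p\in q$ matches Lemma \ref{lem:map}, and your observations that $\{p: x\in\ell_p\}$ is collinear and $\{q : x\in\ell_q\}$ is a pencil through a common line are exactly the dictionary used in the proof of Theorem \ref{thm:rudnevrefined}. The gap is the final step, and it is not bookkeeping. The symmetric Guth--Katz rich-points theorem you invoke requires that no plane contain more than about $\sqrt N$ of the $N$ lines, and this hypothesis fails structurally for your families: every $\ell_p$ with fixed first coordinate $a$ lies in the plane $x=a$, and every $\ell_q$ with fixed coefficient $t$ lies in the plane $y=t$. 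These coplanar families come from points of $P$ sharing a coordinate (resp.\ planes of $Q$ sharing a coefficient), not from collinear points or coaxial planes, so the hypothesis ``no line contains $k$ points of $P$'' gives no control over them and they cannot be stripped out and absorbed into the $k|Q|$ term. They create many $L$--$L$ and $M$--$M$ intersections, which are irrelevant to $I(P,Q)$ but fatal to any symmetric rich-points count.

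Second, even where applicable, the symmetric bound is the wrong tool: applied to the $N=|P|+|Q|\leq 2|Q|$ lines it can only produce totals of order $|Q|^{3/2}$, and no dyadic rearrangement of $\sum_x n_P(x)n_Q(x)$ by itself recovers the asymmetric main term $O(|P|^{1/2}|Q|)$ --- you flag this as ``the delicate point,'' but it is the missing idea, not a technicality. The paper resolves both problems at once with a \emph{bipartite} intersection bound (Lemma \ref{lem:guthkatz}): interpolate a surface of degree $O(|P|^{1/2})$ through the \emph{smaller} family $L$ only (this is where $|P|\leq|Q|$ and $|P|=O(p^2)$ enter), so that each line of $M$ meets components not containing it in $O(|P|^{1/2})$ points, giving the main term; and phrase the degeneracy condition for \emph{quadrics}, namely that no quadric contains $s$ lines of $L$ \emph{and} $t$ lines of $M$ simultaneously --- a quadric or plane rich in lines of one family alone is harmless. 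After a generic rotation the $\varphi(p_i)$ are pairwise disjoint, hence lie in one ruling of such a quadric and the $\psi(q_j)$ in the other, forcing every $p_i$ into every $q_j$, i.e.\ the $p_i$ collinear on a line contained in all the $q_j$; this is how the quadric condition translates back to the hypothesis of Theorem \ref{thm:rudnev}. To repair your argument you would need to prove precisely this bipartite lemma (via Koll\'ar's interpolation and the analysis of ruled and non-ruled components), at which point you have reconstructed the paper's proof.
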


This bound is optimal for $k\geq |P|^{1/2}$, 
since then we can take $k-1$ points on a line and all planes of $Q$ containing that line, to get $(k-1)|Q|$ incidences.
On the other hand, for small $k$ there are known to be better bounds over $\R$ (see \cite{R}), 
but it is unknown if such bounds hold over other fields.
Theorem \ref{thm:rudnev} has already seen many applications, including the current best sum-product bound over finite fields \cite{RRS} and the current best point-line incidence bound over finite fields \cite{SZ}.

The point of this note is to give an alternative proof of Theorem \ref{thm:rudnev} that is shorter and simpler than that of Rudnev \cite{R}.
Rudnev's proof maps points and planes in $\F^3$ to planes in the Klein quadric (a four-dimensional variety in $\F^5$ that parametrizes lines in $\F^3$), in such a way that a point is incident to a plane if and only if the corresponding planes intersect in a line.
Intersecting with a generic hyperplane, 
these plane-plane intersections become line-line intersections in a three-dimensional variety, which can be bounded using a customized version of a result of Guth and Katz \cite{GK}.
Some work is required to show that the parameter $k$ in Theorem \ref{thm:rudnev} corresponds to a similar parameter in the Guth--Katz theorem.

Our proof uses a direct map from point-plane incidences to line-line intersections.
The proof is close in spirit to that of Rudnev, and certainly inspired by it.
However, by staying in three dimensions, several technical complications disappear,
and we can use a more straightforward variant of the Guth--Katz bound.
We can in fact strengthen Theorem \ref{thm:rudnev} slightly, by weakening the condition on collinear points (see Theorem \ref{thm:rudnevrefined}).

\section{Mapping points and planes to lines and lines}\label{sec:map}

We first give a parametrization of the family of lines in $\F^3$ passing through a fixed line $\lambda$ in $\F^3$.
For concreteness, 
we choose $\lambda$ to be the $z$-axis.
The parametrization also uses a plane $\pi$ that is disjoint from $\lambda$ (in affine space);
for concreteness,
we choose $\pi$ to be the plane defined by $x=1$.
Given a line $\ell\subset \F^3$ that intersects $\lambda$ and $\pi$, 
we write its intersection point with $\lambda$ in the form $(0,0,a)$
and its intersection point with $\pi$ in the form $(1,b,c)$,
and we set 
\[\ell^* = (a,b,c)\in \F^3.\]

Given a point $p\in \F^3$, 
we define 
\[\varphi(p) = \{\ell^*: ~ \text{$\ell$ contains $p$ and intersects $\lambda$ and $\pi$} \}.\]
In other words, we consider the pencil of lines through $p$ hitting $\lambda$, and we represent the lines of this pencil in the parametrization above.
For a plane $q\subset \F^3$, 
we define
\[\psi(q) = \{\ell^*: ~ \text{$\ell$ is contained in $q$ and intersects $\lambda$ and $\pi$} \}.\]
In other words, we consider the pencil of lines contained in $q$ that pass through the intersection point of $q$ and $\lambda$, and represent it as above.

The following lemma shows that typically $\varphi(p)$ and $\psi(q)$ are lines, 
which intersect if and only if $p$ is incident to $q$.
After proving the lemma, we forget about the parametrization, 
and we only use the properties stated in this lemma.

\begin{lemma}\label{lem:map}
Let $p\in \F^3$ be a point outside the $yz$-plane,
and let $q$ be a plane that intersects $\lambda$ and $\pi$ but does not contain $\lambda$.
Then $\varphi(p)$ is a line contained in a plane of the form $y=y_0$, and $\psi(q)$ is a line contained in a plane of the form $x=x_0$.
We have an incidence $p\in q$ if and only if the line $\varphi(p)$ intersects the line $\psi(q)$.
\end{lemma}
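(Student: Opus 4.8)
The plan is to make both parametrizations explicit in coordinates, read off from them that each image is a line lying in the claimed plane, and then deduce the incidence equivalence from the fact that the assignment $\ell \mapsto \ell^*$ is injective. I begin with $\varphi(p)$. Writing $p = (p_1,p_2,p_3)$ with $p_1 \neq 0$ (which is precisely the hypothesis that $p$ lies off the $yz$-plane), every line through $p$ meeting $\lambda$ meets it in a single point $(0,0,a)$, since $p \notin \lambda$ forces such a line to differ from $\lambda$; conversely each $a \in \F$ determines one such line. Because its direction $(p_1, p_2, p_3-a)$ has nonzero $x$-coordinate, the line meets $\pi = \{x = 1\}$ in a unique point, and a direct computation gives
\[
\ell^* = \left(a,\ \tfrac{p_2}{p_1},\ \tfrac{p_1-1}{p_1}\,a + \tfrac{p_3}{p_1}\right).
\]
As $a$ ranges over $\F$ this is an affine-linear, nonconstant parametrization (the first coordinate equals $a$), so $\varphi(p)$ is a line, and its constant $y$-coordinate $y_0 = p_2/p_1$ places it in the plane $y = y_0$.

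Next I would treat $\psi(q)$. Since $q$ meets $\lambda$ but does not contain it, $q \cap \lambda$ is a single point $(0,0,a_0)$; since $q$ meets $\pi$ and is neither equal nor parallel to it (note $\pi$ misses $\lambda$, whereas $q$ does not), $q \cap \pi$ is a line inside $\pi$. Any line $\ell \subset q$ meeting $\lambda$ must meet it within $q \cap \lambda$, hence passes through $(0,0,a_0)$, and such a line is determined by its intersection point $(1,b,c) \in q\cap\pi$, giving $\ell^* = (a_0,b,c)$. As $(1,b,c)$ runs over the line $q \cap \pi$, the pair $(b,c)$ traces a line while the first coordinate stays fixed at $a_0$, so $\psi(q)$ is a line in the plane $x = x_0 := a_0$.

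For the incidence equivalence I would use that $\ell^* = (a,b,c)$ recovers $\ell$ as the line through $(0,0,a)$ and $(1,b,c)$, so $\ell \mapsto \ell^*$ is injective; thus a common point of $\varphi(p)$ and $\psi(q)$ corresponds to a single line $\ell$ that simultaneously passes through $p$ and lies in $q$. If $p \in q$, then the line through $p$ and $(0,0,a_0)$ lies in $q$, meets $\lambda$ at $(0,0,a_0)$, and meets $\pi$ because $p_1 \neq 0$, so its image lies in $\varphi(p)\cap\psi(q)$. Conversely, any $\ell$ contributing to this intersection satisfies $p \in \ell \subset q$, forcing $p \in q$.

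The computations themselves are routine; the step that needs the most care is verifying that each image is a genuine line rather than a proper subset, and that every auxiliary line actually meets $\lambda$ and $\pi$ transversally. I expect both points to reduce exactly to the two standing hypotheses ($p_1 \neq 0$, and $q$ meeting but not containing $\lambda$), so the main obstacle is the bookkeeping of these non-degeneracy conditions rather than any substantive difficulty.
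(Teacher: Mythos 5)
Your proof is correct and follows essentially the same route as the paper's: an explicit coordinate computation showing $\varphi(p)=\{(a,\,p_2/p_1,\,\frac{p_1-1}{p_1}a+\frac{p_3}{p_1}):a\in\F\}$ and $\psi(q)=\{(a_0,b,c):(1,b,c)\in q\cap\pi\}$, with the incidence equivalence coming from the fact that a common point of $\varphi(p)$ and $\psi(q)$ corresponds to the unique line $\ell$ with $p\in\ell\subset q$ meeting $\lambda$ (your injectivity of $\ell\mapsto\ell^*$ is the same observation). If anything, you are more careful than the paper in tracking the non-degeneracy hypotheses ($p_1\neq 0$, and $q$ meeting but not containing $\lambda$), which is fine.
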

\begin{proof}
If $p\in \F^3$ is not on the $yz$-plane, then the plane through $p$ and $\lambda$ intersects $\pi$ in a line of the form $y=y_0$,
so $\varphi(p)$ lies in the plane $y=y_0$.
If a line $\ell$ through $p$ intersects $\lambda$ in the point $(0,0,t)$, 
then $\ell$ intersects $\pi$ in the point $(1,y_0,ut+v)$, for fixed $u,v$.
Then $\varphi(p)$ has the form $\{(t, y_0, u t+v):t\in \F\}$, so it is a line.

If a plane $q$ intersects $\lambda$ in a unique point $(0,0,x_0)$,
and $\pi$ in the line $\{(1,t,u t + v): t\in \F\}$,
then $\psi(q)$ is the line $\{(x_0, t, ut+v): t\in \F\}$.

We have an incidence $p\in q$ if and only if there is a unique line $\ell$ intersecting $\lambda$ with $p\in \ell \subset q$, 
which is equivalent to the point $\ell^*$ lying on both $\varphi(p)$ and $\psi(q)$.
\end{proof}

The fact that each $\varphi(p)$ is contained in a plane of the form $y=y_0$ means that there is a line at infinity such that every $\varphi(p)$ intersects that line.
Similarly, there is a (different) line at infinity such that every $\psi(q)$ intersects that line.

We note here that if the points $p_1,\ldots,p_m$ lie on a line $\ell$ that is contained in the planes $q_1,\ldots,q_n$,
with $m,n\geq 3$, 
then the lines $\varphi(p_i)$ and $\psi(q_j)$ all lie on the same quadric (see Section \ref{sec:guthkatz}).
Indeed, any three lines lie on a quadric, so there is a quadric $S$ containing $\varphi(p_1),\varphi(p_2),\varphi(p_3)$.
Since each $q_j$ contains $p_1,p_2,p_3$, 
the line $\psi(q_j)$ intersects the lines $\varphi(p_1),\varphi(p_2),\varphi(p_3)$,
so $\psi(q_j)$ intersects $S$ in at least three points.
This implies that each $\psi(q_j)$ is contained in $S$.
By a similar argument it then follows that the lines $\varphi(p_4),\ldots, \varphi(p_m)$ lie on $S$.
We will use a related observation in the proof of Theorem \ref{thm:rudnev}.

It would be natural to do the above in projective space $\proj^3$, because then various exceptions would not occur (for instance, every line would intersect $\pi$).
However, the parameter space would then be $\proj^1\times \proj^2$ (and not $\proj^3$), 
and this would introduce other complications.
This is why we have opted for the affine formulation.
But note that the parametrization could be made to work for any choice of line $\lambda$ and plane $\pi$ in $\proj^3$, as long as $\lambda\not\subset\pi$.

\newpage
\section{A bipartite version of the Guth--Katz intersection bound}\label{sec:guthkatz}

We now prove the variant of the Guth--Katz intersection bound that we use. 
It differs from the original intersection bound of Guth and Katz \cite{GK} in that it bounds the number of points where a line from one set intersects a line from another set,
whereas \cite{GK} bounds the number of points where two lines intersect that come from the same set of lines.
In other words, the statement here is a `bipartite' version of that in \cite{GK}.

Another difference is that we prove the bound over arbitrary fields, whereas the bound in \cite{GK} is stated over $\R$.
Koll\'ar \cite{K} showed that 
the Guth--Katz intersection bound also holds over finite fields, if one stipulates that the set of lines is not too large compared to the characteristic\footnote{Note that this is not known to be true for the bound of \cite{GK} on point-line incidences in $\R^3$.}.
Rudnev \cite{R} proved a bipartite version over arbitrary fields, but his statement is customized to the situation in his proof of Theorem \ref{thm:rudnev}.
Here we use the setup and the tools of Koll\'ar \cite{K} to prove the version that is convenient for us.

We will use the following definitions and facts regarding algebraic surfaces and the lines they contain; see \cite{K,R} for further details.
A \emph{quadric} is an algebraic surface of degree two (which may be a union of two planes).
Any three lines are contained in at least one quadric.
A surface $S$ is \emph{singly-ruled} if each point of $S$ is contained in exactly one line that is contained in $S$, 
and $S$ is \emph{doubly-ruled} if each point of $S$ is contained in exactly two lines that are contained in $S$.
A doubly-ruled surface has two \emph{rulings}, i.e., two families of lines such that any two lines within a family are disjoint, but every line from one family intersects every line of the other family.

Given two sets $L,M$ of lines, we write $I(L,M)$ for the number of intersection points between $L$ and $M$, i.e., points contained in at least one line of $L$ and at least one line of $M$.

\begin{lemma}\label{lem:guthkatz}
Let $L,M$ be two finite sets of lines in $\F^3$.
Assume that $|L|\leq |M|$, 
and if $\F$ has positive characteristic $p$,
assume that $|L|=O(p^2)$.
Suppose that no quadric contains $s$ lines of $L$ and $t$ lines of $M$.
Then
\[ I(L,M) = O\left(|L|^{1/2}|M| + t|L| + s|M|\right).\]
\end{lemma}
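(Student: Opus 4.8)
The plan is to prove Lemma~\ref{lem:guthkatz} by a double-counting argument combined with a case analysis on the degree of the surfaces that capture many incidences, following the polynomial-method framework of Koll\'ar~\cite{K}. The central quantity is the number of pairs $(\ell,m)\in L\times M$ that meet. First I would handle the easy part: pairs $(\ell,m)$ whose intersection point lies on a line $\ell$ meeting only few lines of $M$ (say at most $C|L|^{1/2}$ of them) contribute at most $O(|L|^{1/2}|M|)$ in total, so it suffices to bound the contribution of \emph{rich} lines $\ell\in L$, those meeting more than $C|L|^{1/2}$ lines of $M$.

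The main step is to control these rich lines. The idea is to find a single polynomial $F$ of controlled degree $d=O(|L|^{1/2})$ that vanishes on every line of $L$; such an $F$ exists by a dimension count, since the space of polynomials of degree $d$ has dimension $\binom{d+3}{3}\gg d\cdot(d+1)$, which beats the $O(d)$ conditions imposed per line when $d\sim|L|^{1/2}$. (This is exactly where the hypothesis $|L|=O(p^2)$ in positive characteristic is needed, so that $d$ stays below the characteristic and the counting of conditions remains valid.) I may take $F$ squarefree. Now any rich line $m\in M$ meets more than $\deg F$ lines of $L$, hence meets the zero set of $F$ in more than $\deg F$ points, so by B\'ezout $m\subset Z(F)$ as well. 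Thus $Z(F)$ contains all of $L$ and all the rich lines of $M$, and the problem is reduced to bounding intersections among lines lying on a surface of degree $O(|L|^{1/2})$.

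The heart of the argument is then the structural classification of the components of $Z(F)$ according to whether they are planes, doubly-ruled quadrics, or surfaces that are at most singly-ruled of higher degree. On a singly-ruled component or a component that is not ruled, a generic line of $L$ is not contained in the surface's ruling structure, so such a line meets the component in $O(\deg)$ points, and summing these Cauchy--Schwarz/B\'ezout-type bounds over the $O(|L|^{1/2})$ components yields the main term $O(|L|^{1/2}|M|)$. The dangerous components are the planes and the doubly-ruled quadrics, because there two families of lines can intersect in a grid and produce many incidences. This is precisely where the parameters $s$ and $t$ enter: the hypothesis that no quadric (in particular no plane) contains $s$ lines of $L$ and $t$ lines of $M$ caps the number of $L$-lines or $M$-lines on any such component, so the grid on each planar or quadric component contributes only $O(s\cdot(\text{its }M\text{-lines}) + t\cdot(\text{its }L\text{-lines}))$, summing to $O(s|M|+t|L|)$.

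The hard part will be making the component-by-component bookkeeping rigorous over an arbitrary field, especially the claim that a quadric or plane not in the ``grid'' regime contributes only the main term, and ensuring that the degree and singular-locus estimates from \cite{K} apply uniformly in positive characteristic. In particular I expect the delicate point to be bounding intersections on the non-ruled or singly-ruled components without appealing to any real-analytic or topological input, relying instead purely on B\'ezout and the ruled-surface structure theory assembled by Koll\'ar, and verifying that the exceptional components where many lines of both $L$ and $M$ accumulate are exactly the quadrics and planes singled out by the hypotheses on $s$ and $t$. Assembling the three contributions gives $I(L,M)=O(|L|^{1/2}|M|+t|L|+s|M|)$, as claimed.
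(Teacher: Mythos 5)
Your outline follows essentially the same route as the paper, which is itself modeled on Koll\'ar \cite{K}: interpolate a surface $Z(F)$ of degree $O(|L|^{1/2})$ containing every line of $L$, use B\'ezout to show that lines of $M$ meeting more than $\deg F$ lines of $L$ (at distinct points) lie on $Z(F)$ while the remaining lines of $M$ contribute $O(|L|^{1/2})$ intersection points each, and then classify the components, with the hypothesis on $s$ and $t$ capping the planes and doubly-ruled quadrics and yielding the $t|L|+s|M|$ term. (Your opening pruning of lines $\ell\in L$ that meet few lines of $M$ is superfluous --- every line of $L$ lies on $Z(F)$ regardless; the operative dichotomy is the one you state afterwards, on lines $m\in M$.) However, you misplace the hypothesis $|L|=O(p^2)$. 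The interpolation step is a linear-algebra dimension count valid over any field: passing to the algebraic closure, a degree-$d$ polynomial vanishing at $d+1$ points of a line vanishes identically on it, with no constraint from the characteristic. Where the hypothesis is genuinely indispensable is precisely the step you defer as ``the hard part'': bounding intersection points on \emph{non-ruled} components. The Monge--Salmon/flecnodal argument behind the bound of $O(\deg(S_i)^3)$ intersection points among lines in a non-ruled component fails in small characteristic, and the positive-characteristic substitute (the remark before Corollary 40 in \cite{K}) requires $\deg S_i$ to be less than $p$, which is exactly what $|L|=O(p^2)$ buys. As written, your sketch suggests the non-ruled case is unconditional, which is false.

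A second concrete gap is the singly-ruled case. Your statement that ``a generic line of $L$ \ldots meets the component in $O(\deg)$ points'' conflates transversal intersections with intersections among lines \emph{contained} in the component: lines of $L$ assigned to a singly-ruled component lie inside it, so the relevant fact is that a line contained in such a surface meets at most $\deg(S_i)$ other lines contained in it --- and this fails for up to two \emph{special} lines per component, which can meet infinitely many lines of the ruling (Proposition 55 in \cite{K}). These must be handled separately; they contribute at most $2\max\{|L_i|,|M_i|\}$ per component and $O(|M|)$ in total, which fits inside the stated bound, but the argument does not close without this case. Finally, to sum per-component counts without double counting, assign each line contained in $Z(F)$ to a single component (say, of smallest index), so that $\sum_i |L_i|\leq |L|$ and $\sum_i |M_i|\leq |M|$; your bookkeeping implicitly assumes this. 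With these repairs your proposal becomes the paper's proof.
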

\begin{proof}
By interpolation (see \cite[Lemma 10]{K}),
there is a surface $S$ of degree $O(|L|^{1/2})$ such that every line of $L$ is contained in $S$.
We decompose $S$ into irreducible components $S_1,\ldots, S_n$.

First we count the points where a line $\ell\in L$ intersects a line $m\in M$,
 with $\ell$ contained in a component $S_i$ that does not contain $m$.
A line $m\in M$ has $O(|L|^{1/2})$ intersection points with components of $S$ that do not contain $m$,
so altogether there are $O(|L|^{1/2}|M|)$ intersection points of this kind.
It remains to count the intersection points between lines that lie in the same component.

We assign every line of $L$ or $M$ that is contained in $S$ to one component containing that line; if a line is contained in more than one component $S_i$, we assign it to the component with smallest index $i$.
Let $L_i$ and $M_i$ be the sets of lines from $L$ and $M$ that are assigned to $S_i$.
Note that $\sum |L_i| = |L|$ and $\sum |M_i| \leq |M|$.
To count the remaining intersection points it suffices to count, for each component $S_i$, the intersection points between a line from $L_i$ and a line from $M_i$.

We count the intersection points that occur in components
of $S$ that are quadrics or planes.
By assumption,
such a component $S_i$ contains either less than $s$ lines from $L$,
or it contains less than $t$ lines from $M$.
In the first case, we count less than $s|M_i|$ intersection points, 
and in the second case we count less than $t|L_i|$ intersection points.
It follows that there are 
less than $s\sum |M_i|+t\sum |L_i| \leq t|L|+s|M|$ intersection points of this kind.

Next we count the intersection points that occur in singly-ruled components of $S$.
Let $S_i$ be such a component.
By \cite[Proposition 55]{K}, there are at most two special lines in $S_i$ that intersect infinitely many other lines in $S_i$.
These contribute at most $2\max\{|L_i|,|M_i|\}$ intersections within $S_i$,
and $O(|M|)$ overall.
Any other line intersects at most $\deg(S_i)$ lines in $S_i$, resulting in at most $\sum \deg(S_i)|L_i|=O(|L|^{1/2}|M|)$ intersection points. 

Finally, 
we count intersection points inside non-ruled components of $S$.
In characteristic zero, we use \cite[Corollary 21]{K},
which states that a non-ruled component $S_i$ contains $O(\deg(S_i)^3)$ intersection points between lines contained in it.
In positive characteristic $p$, the remark before \cite[Corollary 40]{K} claims the same fact if the degree of $S_i$ is less than the characteristic, which corresponds to the condition $|L_i|=O(p^2)$.
Thus there are altogether at most $O(|L|^{3/2})\leq O(|L|^{1/2}|M|)$ intersection points of this kind.
\end{proof}

The bound in Lemma \ref{lem:guthkatz} is optimal for $k\geq |L|^{1/2}$ in a similar way to Theorem \ref{thm:rudnev},
since one can take a quadric containing $k-1$ lines of $L$ and all lines of $M$,
to get $(k-1)|M|$ intersection points.
The condition on lines in a quadric could be refined somewhat by considering only planes and doubly-ruled surfaces (for instance, over $\R$ an ellipsoid contains no lines, so does not need to be excluded).


\section{The point-plane incidence bound}

We are now ready to give our alternative proof of Theorem \ref{thm:rudnev}.
In fact, we prove a slightly stronger statement, 
which has a weaker condition on collinearities.
The rationale is that in Theorem \ref{thm:rudnev},
 the condition that no line contains $k$ points of $P$ is somewhat wasteful, since such a line only leads to many incidences if it is also contained in many planes of $Q$.

\begin{theorem}\label{thm:rudnevrefined}
Let $P$ be a finite set of points in $\F^3$
and $Q$ a finite set of planes in $\F^3$.
Assume that $|P|\leq |Q|$, 
and if $\F$ has positive characteristic $p$,
assume that $|P|=O(p^2)$.
Suppose that no line contains $s$ points of $P$ and is contained in $t$ planes of $Q$.
Then
\[ I(P,Q) = O\left(|P|^{1/2}|Q| + t|P|+ s|Q|\right).\]
\end{theorem}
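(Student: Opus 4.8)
The plan is to reduce Theorem \ref{thm:rudnevrefined} directly to Lemma \ref{lem:guthkatz} using the map $\varphi,\psi$ from Section \ref{sec:map}. First I would dispose of degenerate configurations that the map does not handle: by choosing the auxiliary line $\lambda$ and plane $\pi$ generically (which is possible by the remark at the end of Section \ref{sec:map}, since any $\lambda\not\subset\pi$ works), I can arrange that no point of $P$ lies on the $yz$-plane, no plane of $Q$ contains $\lambda$, and every plane of $Q$ meets $\lambda$ and $\pi$. Over a small finite field such a generic choice may not exist, but in that case $|\F|$ is bounded and the incidence bound is trivial; alternatively one works in $\proj^3$ and absorbs the bounded number of exceptional incidences (points at infinity, planes containing $\lambda$) into the constant. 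After this cleanup, Lemma \ref{lem:map} applies to every pair, so I set $L=\{\varphi(p):p\in P\}$ and $M=\{\psi(q):q\in Q\}$, two families of lines in $\F^3$ with $|L|=|P|$, $|M|=|Q|$, and $|L|\le|M|$.

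The heart of the argument is the equivalence between incidences and intersections. By Lemma \ref{lem:map}, $p\in q$ if and only if $\varphi(p)$ meets $\psi(q)$, so each incidence produces an intersection point of a line of $L$ with a line of $M$. I would then show $I(P,Q)=O(I(L,M)+\text{error})$, where the error accounts for the fact that several incidences may map to the same intersection point of $L$ and $M$. The key geometric feature controlling this multiplicity is that every line $\varphi(p)$ lies in a plane $y=y_0$ and every line $\psi(q)$ lies in a plane $x=x_0$; in the terminology of the final paragraph of Section \ref{sec:map}, all lines of $L$ meet a common line at infinity, and likewise for $M$. This means that two lines $\varphi(p),\varphi(p')$ with distinct $y$-coordinates meet in at most one (finite) point, and a point of $\F^3$ lies on a bounded number of lines of $L$ unless it lies on the common line at infinity; the same holds for $M$.

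Next I would translate the collinearity hypothesis on $P$ and $Q$ into the quadric hypothesis of Lemma \ref{lem:guthkatz}. This is exactly the role of the observation proved in Section \ref{sec:map}: if a line $\ell\subset\F^3$ carries three or more points $p_i\in P$ and is contained in three or more planes $q_j\in Q$, then the corresponding lines $\varphi(p_i)$ and $\psi(q_j)$ all lie on a single quadric $S$. Conversely, I must argue that a quadric containing many lines of $L$ and many lines of $M$ forces the underlying points and planes to share a common line $\ell$. Concretely, if a quadric $S$ contains $s'$ lines $\varphi(p)$ and $t'$ lines $\psi(q)$, then by the ruling structure of $S$ (the lines $\varphi(p)$ and $\psi(q)$ intersect pairwise across the two sets, so they split into the two rulings) one recovers, via the inverse of the parametrization, a common line $\ell$ in $\F^3$ passing through all those $p$ and lying in all those $q$. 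Thus the hypothesis ``no line contains $s$ points of $P$ and lies in $t$ planes of $Q$'' guarantees that no quadric contains $s$ lines of $L$ and $t$ lines of $M$ (up to an additive constant in $s,t$ coming from the threshold $3$ in the quadric observation). Applying Lemma \ref{lem:guthkatz} with these parameters then yields $I(L,M)=O(|P|^{1/2}|Q|+t|P|+s|Q|)$, matching the desired bound.

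The main obstacle I expect is the bookkeeping in the second step: controlling how many incidences collapse to a single intersection point, and ensuring the resulting error term is absorbed into the stated bound rather than inflating it. The delicate case is when many lines of $L$ (respectively $M$) pass through a single point of $\F^3$ or concur at the line at infinity; I would need to bound the number of such coincidences using the planar structure ($\varphi(p)\subset\{y=y_0\}$, $\psi(q)\subset\{x=x_0\}$) and argue that any point lying on three lines $\varphi(p_i)$ and three lines $\psi(q_j)$ is precisely the situation governed by the quadric observation, hence already counted against the $s,t$ thresholds. Getting the equivalence between the affine collinearity condition and the quadric condition exactly right, including the interplay between points at infinity and the two rulings, is where the real care is required; the rest is a clean substitution into Lemma \ref{lem:guthkatz}.
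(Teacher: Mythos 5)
Your skeleton matches the paper's proof — map incidences to line-line intersections via $\varphi,\psi$, translate the collinearity hypothesis into the quadric condition, apply Lemma \ref{lem:guthkatz} — but you miss the one device that makes the reduction clean, and without it your plan has a genuine hole. The paper applies a generic rotation so that, in addition to the non-degeneracy conditions you list, no line spanned by two points of $P$ and no intersection line of two planes of $Q$ meets $\lambda$; consequently the lines of $L$ are pairwise disjoint and the lines of $M$ are pairwise disjoint, every intersection point of $L$ with $M$ lies on exactly one line of each family, and $I(P,Q)=I(L,M)$ holds \emph{exactly}, with no multiplicity error term to bookkeep. Your substitute claim — that a point of $\F^3$ lies on a bounded number of lines of $L$ away from the line at infinity — is false under the refined hypothesis: if a line $\ell$ meets $\lambda$ at $(0,0,a)$ and $\pi$ at $(1,b,c)$, then $\ell^*=(a,b,c)$ lies on $\varphi(p)$ for \emph{every} $p\in\ell$, so a line carrying many points of $P$ (which Theorem \ref{thm:rudnevrefined} permits, as long as it lies in few planes of $Q$) yields unboundedly many concurrent lines of $L$, and all incidences $(p,q)$ with $p\in\ell\subset q$ collapse to the single intersection point $\ell^*$, so $I(L,M)$ undercounts $I(P,Q)$. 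The "delicate bookkeeping" you defer to the end is thus not routine; it is precisely the problem the generic rotation eliminates before the map is ever applied.

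The same omission weakens two further steps. In the quadric argument you assert that the $\varphi(p_i)$ and $\psi(q_j)$ "intersect pairwise across the two sets, so they split into the two rulings" — but the cross-intersections are the \emph{conclusion}, not a given: the paper first uses pairwise disjointness of the $\varphi(p_i)$ (again supplied by genericity) to place them in one ruling, then uses the line at infinity met by all the $\varphi(p_i)$ and none of the $\psi(q_j)$ (Lemma \ref{lem:map} and the remark after it) to force the $\psi(q_j)$ into the other ruling, and only then concludes that each $p_i$ lies in each $q_j$, hence on a common line. Finally, your fallback for finite fields — "if a generic choice does not exist then $|\F|$ is bounded and the bound is trivial" — does not work: the number of genericity constraints grows with $|P|$ and $|Q|$, so $\F$ can be arbitrarily large yet still too small. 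The paper's footnote handles this correctly by passing to an infinite extension of $\F$, over which the hypotheses are preserved (a line through two points of $P$ is defined over $\F$, so the collinearity condition survives) and the resulting bound descends to $\F$. With the generic rotation, justified via the extension, inserted at the start, your outline becomes the paper's proof; without it, both the central equality $I(P,Q)=I(L,M)$ and the ruling argument fail.
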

\begin{proof}
We deduce Theorem \ref{thm:rudnev} from Lemma \ref{lem:guthkatz} using the maps $\varphi$ and $\psi$ introduced in Section \ref{sec:map}.
By applying a generic rotation\footnote{This could technically be a problem when $\F$ is a finite field and $P$ and $Q$ are large, 
since there may not be enough rotations to ensure the properties we assume. 
However, we can pass to any infinite extension of $\F$, 
and obtain the stated bound over that extension, which then implies the bound over $\F$.} to $\F^3$, we can assume that no points of $P$ are on the $yz$-plane, and that all planes in $Q$ intersect $\lambda$ (the $z$-axis) and $\pi$ (the plane $x=1$) and do not contain $\lambda$.
We set $L = \varphi(P)$ and $M = \psi(Q)$.
Due to the generic rotation, we can assume that the lines in $L$ are pairwise disjoint, and the lines in $M$ are pairwise disjoint.

By Lemma \ref{lem:map}, a point-plane incidence $p\in q$ corresponds to an intersection point between $\varphi(p)$ and $\psi(q)$.
Since the lines in $L$ are pairwise disjoint and the lines in $M$ are pairwise disjoint,
it follows that $I(P,Q) = I(L,M)$.
The theorem now follows from Lemma \ref{lem:guthkatz}, 
if we show that there is 
no quadric containing $s$ lines of $L$ and $t$ lines of $M$.

Suppose a quadric $S$ contains the lines 
$\varphi(p_1),\ldots,\varphi(p_s)$
of $L$ and the lines 
$\psi(q_1),\ldots,\psi(q_t)$
of $M$.
The lines 
$\varphi(p_i)$
are disjoint, so they are in the same ruling of $S$.
The lines $\psi(q_j)$ must lie in the other ruling of $S$, since there is a line at infinity intersecting all the $\varphi(p_j)$ but none of the $\psi(q_j)$ (by Lemma \ref{lem:map} and the remark right after it).
It follows that every $\varphi(p_i)$ intersects every $\psi(q_j)$,
which means that each of the points $p_1,\ldots, p_s$ is incident with each of the planes $q_1,\ldots, q_t$.
This is only possible if the points $p_1,\ldots, p_s$ lie on a line that is contained in each of the planes $q_1,\ldots, q_t$, which would contradict the assumption of the theorem.
\end{proof}


\end{document}